\newtheorem{thm}{Theorem}
\newtheorem{lem}[thm]{Lemma}
\newtheorem{prop}[thm]{Proposition}
\newtheorem{question}{Question}
\newtheorem{example}{Example}
\newtheorem*{remark}{Remark}
\newtheorem*{claim}{Claim}
\newcommand{\R}{\mathbb R}
\begin{document}

\title{Contact structures induced by skew fibrations of $\R^3$}
\author{Michael Harrison\footnote{Portions of this work were completed while the author was in residence at MSRI during the Fall 2018 semester and supported by NSF Grant DMS-1440140.}}

\maketitle

\begin{abstract}
A smooth fibration of $\R^3$ by oriented lines is given by a smooth unit vector field $V$ on $\R^3$, for which all of the integral curves are oriented lines.  Such a fibration is called skew if no two fibers are parallel, and it is called nondegenerate if $\nabla V$ vanishes only in the direction of $V$.  Nondegeneracy is a form of local skewness, though in fact any nondegenerate fibration is globally skew.  Nondegenerate and skew fibrations have each been recently studied, from both geometric and topological perspectives, in part due to their close relationship with great circle fibrations of $S^3$.

Any fibration of $\R^3$ by oriented lines induces a plane field on $\R^3$, obtained by taking the orthogonal plane to the unique line through each point.  We show that the plane field induced by any nondegenerate fibration is a tight contact structure.  For contactness we require a new characterization of nondegenerate fibrations, whereas the proof of tightness employs a recent result of Etnyre, Komendarczyk, and Massot on tightness in contact metric 3-manifolds.

We conclude with some examples which highlight relationships among great circle fibrations, nondegenerate fibrations, skew fibrations, and the contact structures associated to fibrations.
\end{abstract}

\section{Introduction and Statement of Results}

\subsection{Skew and nondegenerate fibrations}
\label{sec:intro} A fibration of $\R^3$ by oriented lines is called \emph{skew} if no two fibers are parallel.  A simple example of a skew fibration may be constructed as follows: choose an oriented line $\ell$ in $\R^3$, and surround $\ell$ with a family of hyperboloids which foliate $\R^3 - \ell$.  Each hyperboloid is ruled, so we may view it as a collection of oriented skew lines.  The collection of all ruling lines, together with $\ell$, give the skew fibration of $\R^3$ depicted below.

\begin{figure}[h!t]
\centerline{
\includegraphics[width=2.3in]{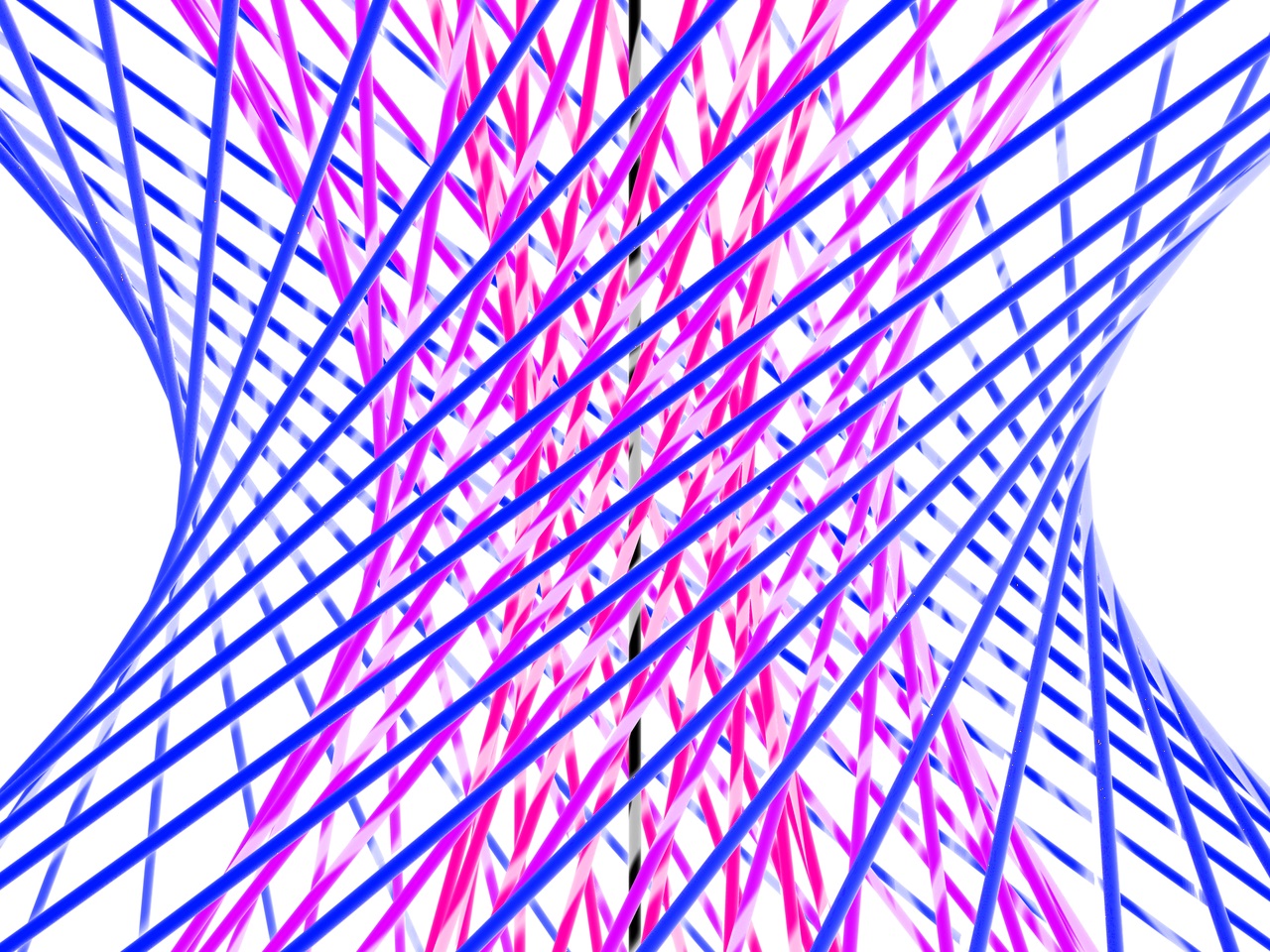}
}
\caption{A skew fibration of $\R^3$ (image by David Eppstein)}
\label{fig:hyper}
\end{figure}

Skew fibrations of $\R^3$ are intimately related to great circle fibrations of $S^3$.  Indeed, consider $S^3 \subset \R^4$, fibered by great circles.  Choose a tangent hyperplane $\R^3$ to $S^3$, say, at the north pole, and centrally project the open upper hemisphere of $S^3$ to $\R^3$.  The projection sends the nonintersecting great semicircles to nonintersecting lines.  Moreoever, since the great circles do not intersect on the equator $S^2$, the image lines do not meet at infinity -- that is, no two lines are parallel. Therefore, any great circle fibration projects to a skew fibration, and the space of all great circle fibrations sits naturally inside the space of skew fibrations.  The skew fibration depicted above is the image of the standard Hopf fibration of $S^3$.  In a recent paper \cite{Harrison}, the author showed that the space of all fibrations of $\R^3$ by skew, oriented lines deformation retracts to its subspace of Hopf fibrations, and furthermore, by restriction, the space of fibrations of $S^3$ by oriented great circles deformation retracts to its subspace of Hopf fibrations.  The latter statement was originally a theorem of Gluck and Warner \cite{GluckWarner}.

In 2009, prior to the author's topological treatment of skew fibrations, Salvai \cite{Salvai} gave a geometric classification (repeated here as Theorem \ref{thm:salvai}) of smooth fibrations of $\R^3$ by oriented lines.  A smooth fibration by oriented lines may be thought of as a unit vector field $V$ on $\R^3$, for which all of the integral curves are oriented lines.  This is equivalent to the condition that $\nabla V$ vanishes in the direction of $V$ at each point of $\R^3$.  Such a fibration is called \emph{nondegenerate} if $\nabla V$ vanishes \emph{only} in the direction of $V$.  Among smooth line fibrations, nondegenerate implies skew (\cite{Salvai}, Lemma 6), but the converse is not true, as demonstrated by Example \ref{ex:skew} in Section \ref{sec:examples} of this note.

\subsection{Contact structures on $\R^3$} Our present goal is to examine the relationship between nondegenerate fibrations and contact structures on $\R^3$.  A contact structure on $\R^3$ is a maximally non-integrable plane field $\xi$.  Any contact structure may be defined as the kernel of a $1$-form $\alpha$ with $\alpha \wedge d\alpha$ never zero.  A contact structure is called \emph{overtwisted} if there exists an embedded disk $D$ in $\R^3$ such that $\partial D$ is tangent to $\xi$ while $D$ is transverse to $\xi$ along $\partial D$.  Otherwise $\xi$ is called \emph{tight}.  A contact structure $\xi$ on $\R^3$ is \emph{tight at infinity} if it is tight outside a compact set.  The contact structures on $\R^3$ were classified by Eliashberg in \cite{Eliashberg}: up to isotopy, there is one tight contact structure, one overtwisted contact structure which is not tight at infinity, and a countable number of pairwise non-isotopic overtwisted contact structures which are tight at infinity.  These latter structures can be obtained by taking an overtwisted contact structure on $S^3$ and deleting a point (see \cite{Eliashberg}, Theorem 1.A for details), whereas the tight contact structure and the overtwisted-at-infinity contact structure can be found, respectively, in Examples \ref{ex:fibs} and \ref{ex:overtwisted} of this note.

\subsection{Main results} \label{sec:mainthm} For any fibration of $\R^3$ by oriented lines, there exists a canonical plane distribution: at each point the plane is the orthogonal complement of the unique fiber passing through that point.  Our main result is the following.

\begin{thm}
\label{thm:main}
The plane distribution induced by any smooth, nondegenerate fibration of $\R^3$ is a tight contact structure.
\end{thm}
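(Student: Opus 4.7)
The plan is to take $\alpha := V^\flat$ (the $1$-form metrically dual to $V$) as the candidate contact form, show that $\xi = \ker\alpha$ is contact, and then invoke an external tightness criterion. A preliminary remark is that $V$ is automatically the Reeb vector field of $\alpha$: $\alpha(V)=1$ is clear, and $i_V d\alpha\equiv 0$ follows from $|V|\equiv 1$ together with $\nabla_V V\equiv 0$.

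\textbf{Contactness.} The identity $\alpha\wedge d\alpha = (V\cdot\operatorname{curl} V)\,dx\wedge dy\wedge dz$ reduces the problem to showing $V\cdot\operatorname{curl} V$ is nowhere zero. Fix $p\in\R^3$ and choose Euclidean coordinates with $V(p)=\partial_z$. Because $|V|\equiv 1$ and $\nabla_V V\equiv 0$, the Jacobian $\nabla V(p)$ vanishes on its $z$-row and $z$-column, and is thus encoded by a $2\times 2$ matrix $M$ on the $(x,y)$-plane; nondegeneracy says precisely that $\det M\neq 0$. Parametrize the fibers near $p$ by $F(x,y,t)=(x,y,0)+tV(x,y,0)$ and compute
\[
\det DF(0,0,t) \;=\; 1 + t\operatorname{tr} M + t^2\det M.
\]
The fibration hypothesis (no two fibers share a point) forces this quadratic in $t$ to have no real root, which combined with $\det M\neq 0$ yields $\det M>0$ and $(\operatorname{tr} M)^2<4\det M$. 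Hence $M$ has a non-real complex-conjugate pair of eigenvalues, so $M$ cannot be symmetric, and its antisymmetric part---which is exactly $V\cdot\operatorname{curl} V$ evaluated at $p$---is nonzero. This delivers both contactness and the ``new characterization'' promised in the introduction: nondegenerate line fibrations are exactly the smooth line fibrations with nowhere-vanishing helicity density.

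\textbf{Tightness.} Equip $(\R^3,\alpha)$ with a compatible Riemannian metric and the induced endomorphism $\varphi$, producing a contact metric structure with Reeb field $V$ whose flow consists of unit-speed Euclidean geodesics. The plan is then to translate the nondegeneracy inequality from Step~1 into the hypothesis on the torsion tensor $h=\tfrac12\mathcal{L}_V\varphi$---or, equivalently, on the sectional curvatures of $2$-planes containing $V$---required by the Etnyre--Komendarczyk--Massot tightness criterion, and then to apply their theorem to conclude that $\xi$ is tight.

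\textbf{Main obstacle.} Step~1 is essentially linear algebra once the quadratic-in-$t$ Jacobian identity is extracted. The real work lies in Step~2: (a) one must select a compatible metric so that $(\R^3,\alpha,g,\varphi)$ actually satisfies the Etnyre--Komendarczyk--Massot hypotheses (the standard Euclidean metric need not be compatible in the sense required), and (b) their result is formulated for closed contact $3$-manifolds, so a noncompact adaptation to $\R^3$ is needed---plausibly by first establishing tightness outside a large ball and then invoking Eliashberg's classification to preclude overtwisted disks in the interior.
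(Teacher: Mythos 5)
Your contactness step has the right skeleton and is in fact a more elementary route than the paper's (which passes through Salvai's global characterization of nondegenerate fibrations as definite surfaces in $TS^2$, via Lemma \ref{lem:noreal}). But the pivotal assertion --- that ``no two fibers share a point'' forces $1+t\operatorname{tr}M+t^2\det M$ to have no real root --- is precisely the step the paper warns cannot be waved through, and your stated justification is the trap: the radial field $V(p)=p/|p|$ on $\R^3\setminus\{0\}$ has pairwise disjoint, everywhere nondegenerate fibers, yet its quadratic $(1+t/r)^2$ has a real root; and in general an injective smooth map need not have nonsingular differential (think $x\mapsto x^3$). What actually saves the claim is not disjointness but completeness of the fibers: since each fiber is a full line, near any $q_0=F(0,0,t_0)$ the map $\sigma(q')=q'-\frac{q'_3}{V_3(q')}V(q')$ is a smooth retraction sending $q'$ to the unique intersection of its fiber with the plane $z=0$; then $\sigma\circ F(x,y,t)=(x,y,0)$, so $D\sigma\circ DF$ has rank $2$, and combined with $DF(\partial_t)=V\neq 0$ and $D\sigma(V)=0$ this forces $\ker DF=0$. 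You need to supply an argument of this kind. Separately, your closing claim that nondegeneracy is \emph{equivalent} to nowhere-vanishing helicity is false: the fibration of Example \ref{ex:fibs}(iii), with $V\propto(-y,0,1)$, has $\langle V,\operatorname{curl}V\rangle\neq 0$ everywhere but is degenerate, since $\nabla V$ annihilates $\partial_x$. Only the implication you actually need (nondegenerate $\Rightarrow$ contact) survives.

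The tightness step is a genuine gap: what you offer is a plan whose obstacles you yourself identify as unresolved --- no compatible metric is produced (the Euclidean one is compatible only when the helicity is constant), and the curvature/torsion criteria of Etnyre--Komendarczyk--Massot are formulated for closed or complete bounded-geometry contact metric manifolds, with no adaptation to $\R^3$ given. The paper uses a different and far more concrete piece of \cite{EtnyreEtal}, namely Proposition \ref{prop:tightness} on characteristic foliations: the singular points of the characteristic foliation of a round sphere centered on the distinguished fiber $\ell$ are exactly the two points where $\ell$ meets the sphere, so every such foliation is simple; the proposition then reduces overtwistedness to the existence of a closed Legendrian curve $C$ on some round sphere; and such a $C$ is excluded by an elementary argument --- at a critical point $q$ of the height function in the direction $u$ of $\ell$, the fiber through $q$ must lie in the plane spanned by $u$ and $q$, which contains $\ell$, so it would meet or be parallel to $\ell$. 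Nothing in your proposal substitutes for this, so tightness remains unproved.
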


At first glance, the contact assertion of Theorem \ref{thm:main} seems to be a purely local statement: the first-order contact condition follows from the first-order nondegeneracy condition.  However, consider the vector field $V(p) = \frac{p}{|p|}$ defined on $\R^3 - \left\{ 0 \right\}$.  This vector field induces a foliation of $\R^3 - \left\{ 0 \right\}$ by outward-pointing rays; moreover, the vector field is nondegenerate at every point of its domain.  But the induced plane distribution is not contact -- round spheres are tangent.  Therefore, the proof of Theorem \ref{thm:main} requires global considerations.

Regarding the tightness assertion of Theorem \ref{thm:main}, our argument applies to contact structures associated with a more general class of line fibrations.

\begin{thm}
\label{thm:main2}
Consider a fibration of $\R^3$ by oriented lines (not necessarily skew) such that the induced plane distribution $\xi$ is a contact structure.  If there exists any fiber which is parallel to no other fiber, then $\xi$ is tight. 
\end{thm}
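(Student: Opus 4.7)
The plan is to realize $\xi$ as the contact distribution of a contact metric structure on $\R^3$ with Reeb vector field $V$, and then invoke the tightness theorem of Etnyre, Komendarczyk, and Massot (EKM) for contact metric 3-manifolds, using the distinguished fiber to verify the hypothesis. Setting $\alpha$ equal to the 1-form metrically dual to $V$, we have $\ker \alpha = \xi$ and $\alpha(V) = 1$. Because the integral curves of $V$ are unit-speed geodesics, $\nabla_V V = 0$; because $|V| \equiv 1$, $\langle V, \nabla_W V \rangle = 0$ for every $W$. Substituting these into
\[
d\alpha(V, W) = V(\alpha(W)) - W(\alpha(V)) - \alpha([V, W])
\]
shows $d\alpha(V, \cdot) \equiv 0$, which, combined with the contact hypothesis $\alpha \wedge d\alpha \ne 0$, identifies $V$ as the Reeb vector field of $\alpha$. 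A suitable associated almost complex structure $\phi$ on $\xi$ (after adjusting $\alpha$ or the Euclidean metric in the standard fashion, if needed) then realizes $(\alpha, V, \phi)$ as the underlying data of a contact metric structure on $\R^3$ to which the EKM criterion applies.

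The key point is to verify the EKM tightness hypothesis using the special fiber. In our setting the Reeb orbits are precisely the oriented fibers of the line fibration, and the Gauss map $V \colon \R^3 \to S^2$ records the direction of each orbit. The hypothesis of Theorem \ref{thm:main2} is exactly that some orbit $\ell$ is the unique preimage of its direction $v_0 \in S^2$ under $V$, so $\ell$ is a distinguished Reeb orbit; the linearization of the Reeb flow along $\ell$ records how nearby fibers rotate off $v_0$, and should supply the dynamical input the EKM criterion requires.

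The step I expect to be the main obstacle is precisely this translation: recasting the purely fibrational condition ``parallel to no other fiber'' into whatever curvature or Jacobi-field data the EKM criterion demands, and checking that a condition localized near a single distinguished orbit suffices to force tightness on all of $\R^3$. Eliashberg's classification works in our favor here, since $\R^3$ carries a unique tight contact structure up to isotopy, so any geometric mechanism ruling out even one overtwisted disk is enough to complete the argument.
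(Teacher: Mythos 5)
Your high-level instinct---view the fibration's unit vector field $V$ as the Reeb field of the dual form $\alpha$ and appeal to Etnyre--Komendarczyk--Massot---matches the paper's starting point, and your computation showing $d\alpha(V,\cdot)=0$ is correct. But the entire content of the proof is exactly the step you defer as ``the main obstacle,'' and the mechanism you gesture at (linearization of the Reeb flow along $\ell$, Jacobi-field or curvature data near the distinguished orbit) is not the right translation and is not developed far enough to evaluate. The relevant EKM input is their Proposition 5.3 (quoted in the paper as Proposition \ref{prop:tightness}): if the characteristic foliation on every round sphere about a fixed center is \emph{simple} (exactly one singularity of each sign) and $\xi$ is overtwisted, then all sufficiently large round spheres carry a \emph{closed leaf} of the characteristic foliation. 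So the hypothesis to verify is not dynamical data along $\ell$ but a statement about characteristic foliations on round spheres, and the condition ``$\ell$ is parallel to no other fiber'' enters only at the final stage, to rule out closed leaves.

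Concretely, the paper places the origin on $\ell$ and lets $u$ be its direction. A singularity of the characteristic foliation on a round sphere $S$ centered there is a point $p$ with $T_pS = \xi_p$, i.e.\ a point whose fiber passes through the center; since the only fiber through the center is $\ell$ itself, each sphere has precisely the two singularities $\ell \cap S$, so every characteristic foliation is simple and Proposition \ref{prop:tightness} applies. If $\xi$ were overtwisted there would therefore be a closed Legendrian curve $C$ on some round sphere $S$, disjoint from the two poles. Restricting the height function in the direction $u$ to $C$ and taking a critical point $q$, the tangent line $T_qC$ is orthogonal to $u$, to the radius vector $q$ (since $C \subset S$), and to the fiber through $q$ (since $T_qC \subset \xi_q$). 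Hence the fiber through $q$ lies in the plane through the origin spanned by $u$ and $q$, which also contains $\ell$; so that fiber either meets $\ell$ or is parallel to it---both impossible. This elementary height-function argument is what actually consumes the ``parallel to no other fiber'' hypothesis; without it, or some substitute, your proposal is a plan rather than a proof. (Your closing appeal to Eliashberg's classification also does not help: tightness means no overtwisted disk exists at all, so one must rule out every such disk, not just one.)
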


In the final section we offer an example of a non-skew line fibration for which Theorem \ref{thm:main2} applies, resulting in a tight contact structure.

\begin{example}
\label{ex:fibs}
The following examples provide some context for the above theorems.  In particular, the plane field associated to a smooth, skew fibration may not be contact.  On the other hand, certain smooth non-skew fibrations induce contact structures. 
\begin{enumerate}
\item The plane field induced by the Hopf fibration depicted above is one of the usual representations of the standard (tight) contact structure on $\R^3$, defined in cylindrical coordinates $(r,\theta,z)$ by the $1$-form $dz + r^2 \ d\theta$.

\item The plane distribution induced by the smooth, skew, degenerate fibration of Example \ref{ex:skew} in Section \ref{sec:examples} is not a contact structure.

\item Consider the non-skew fibration of $\R^3$ defined as follows.  Let $(x,y,z)$ be rectangular coordinates on $\R^3$, and foliate $\R^3$ by planes $P_y$ parallel to the $xz$-plane.  Now fiber each individual plane by parallel lines: for the $xz$-plane $P_0$, use lines parallel to the $z$-axis; and for the plane $P_y$, use lines with direction $(-y,0,1)$.  Though non-skew, the induced plane distribution is a tight contact structure; in fact, it is another typical representation of the standard contact structure: $dz -y \ dx$.

\item Consider the planes $P_y$ as above, fiber the $xz$-plane $P_0$ by lines parallel to the $z$-axis, and again fiber each individual plane by parallel lines, but this time let the angle vary at a constant rate with respect to $y$.  This is yet another representation of the standard contact structure on $\R^3$.  This fibration is not skew but it is characterized (among fibrations of $\R^3$ by lines) by a property called \emph{fiberwise homogeneity}: for any two fibers $\ell_1$ and $\ell_2$, there is an isometry of Euclidean space which preserves the fibration and takes $\ell_1$ to $\ell_2$ (see \cite{Nuchi}).  The corresponding property on spheres characterizes the Hopf fibrations \cite{Nuchi2}.

\item To generalize the previous two examples, let $\alpha = \sin f(y) \ dx + \cos f(y) \ dz$ for some smooth $f : \R \to \R$.  Then $\ker \alpha$ is a (tight) contact structure as long as $f'(y) \neq 0$.

\end{enumerate}
These latter examples suggest the following question.
\end{example}

\begin{question} Does there exist a smooth line fibration (not necessarily skew) for which the induced plane distribution is a non-tight contact structure?
\end{question}

We believe the answer is \emph{no}.  In particular, if the hypothesis of Theorem \ref{thm:main2} does not apply, then every fiber admits parallel fibers.  Some preliminary investigations suggest that in this case, the fibration must be of the form of Example \ref{ex:fibs}(v): a foliation by parallel planes, which are individually foliated by lines.  To formalize this, we require some structural results for non-skew fibrations, which are a current topic of investigation by the author and Emmy Murphy.

\begin{remark} Following the submission of this article, Becker and Geiges posted a preprint \cite{BeckerGeiges} confirming the negative answer to the question above, by showing directly that when the hypothesis of Theorem \ref{thm:main2} does not apply, any contact structure induced by a line fibration is diffeomorphic to the standard one.
\end{remark}

Here we mention just one explicit example, which may demonstrate a certain incompatibility of overtwisted disks with line fibrations.

\begin{example} Consider the standard overtwisted contact structure $\xi_{ot}$, written in cylindrical coordinates as $\cos r \ dz + r \sin r \ d \theta$.  The map which sends a point $x \in \R^3$ to the direction of the oriented line orthogonal to the contact plane at $x$ is not a fibration.  In particular, considering $C = \left\{ r = \pi, z = 0 \right\}$, we see that all lines orthogonal to the contact structure on $C$ are vertical, and so these lines foliate the cylinder $r = \pi$.  Then any non-vertical line from the interior of the cylinder will pierce this cylinder.
\label{ex:overtwisted}
\end{example}
\subsection{Tightness of contact structures} In a recent preprint \cite{Gluck}, Gluck showed that great circle fibrations of $S^3$ induce tight contact structures on $S^3$.  The proof of tightness involves, given any contact structure $\xi$ induced by a great circle fibration, deforming $\xi$ to a Hopf contact structure and applying Gray stability.  In $\R^3$, though a similar deformation exists (see Theorem \ref{thm:nondegeneratedefret} below), we do not have Gray stability and must resort to other methods.

Since overtwisted disks can have very complicated geometry, it is often difficult to show that a contact structure is tight, even when the structure is given explicitly.  However, tightness in contact metric $3$-manifolds was recently and thoroughly studied by Etnyre, Komendarczyk, and Massot \cite{EtnyreEtal}.  It follows from \cite{EtnyreEtal}, Proposition 5.3 (repeated below as Proposition \ref{prop:tightness}), that if a contact structure orthogonal to a skew fibration is overtwisted, then there must exist an overtwisted disk for which the boundary lies on a round sphere centered at the origin.  So our proof of tightness involves showing that the skew condition prohibits the existence of a closed Legendrian curve on a round sphere.

In the specific case of nondegenerate fibrations, another possible tightness argument was suggested to us by an anonymous referee.  The idea uses the Complete Connection Criterion (\cite{EliashbergThurston}, Proposition 3.5.6), which we recount in Section \ref{sec:tightness}, together with the Continuity at Infinity property for skew fibrations (Lemma \ref{lem:contatinf}).  In Section \ref{sec:tightness} we offer a brief sketch of this possible proof.

\subsection{Higher-dimensional considerations}  We briefly mention the current status for higher-dimensional skew fibrations.  If $S^n \subset \R^{n+1}$ is fibered by great $p$-spheres, then the central projection from $S^n$ to any tangent $\R^n$ produces a fibration of $\R^n$ by \emph{skew} copies of $\R^p$ -- here two affine $p$-planes in $\R^n$ are called skew if they neither intersect nor contain parallel directions.  However, the condition on dimensions $p$ and $n$ such that skew fibrations exist is much less restrictive than in the spherical case.  In a 2013 paper \cite{OvsienkoTabachnikov}, Ovsienko and Tabachnikov showed that a fibration of $\R^n$ by skew oriented copies of $\R^p$ exists if and only if there exist $p$ linearly independent vector fields on $S^{n-p-1}$, which occurs, by a famous result of Adams \cite{Adams}, if and only if $p \leq \rho(n-p) - 1$, where $\rho$ is the classical Hurwitz-Radon function.  In particular, the only fibrations of $\R^{2p+1}$ by skew $\R^p$ occur when $p \in \left\{ 1, 3, 7 \right\}$.

Spherical fibrations have been extensively studied due, in part, to their relationship with the Blaschke conjecture (see \cite{McKay2} for a recent and thorough summary on the current progress of this conjecture).  On the contrary, skew fibrations have been studied only recently.  It seems that skew fibrations could be a useful tool in the study of spherical fibrations, in the dimensions where both objects exist.


Returning to the matter at hand, we wonder:

\begin{question}
Given a smooth, nondegenerate fibration of odd-dimensional Euclidean space by lines, is the induced hyperplane distribution contact?
\end{question}

The answer is yes for the fibrations of $\R^{2k+1}$ obtained by the projections of Hopf fibrations of $S^{2k+1}$ by great circles.

\begin{remark} Following the submission of this article, Gluck and Yang posted a preprint \cite{GluckYang} giving examples of great circle fibrations of $S^{2k+1}$, for every $k > 1$, which do not correspond to contact structures.  The same construction may be adapted to give a negative answer to the question above.  We still wonder: under what additional conditions on a line fibration is the induced hyperplane distribution contact?
\end{remark}

\textbf{Acknowledgments}. I am grateful to Yasha Eliashberg, John Etnyre, Herman Gluck, and Emmy Murphy for useful and stimulating discussions.  I would also like to thank the referee for comments and suggestions which greatly improved the presentation of this note.

\section{Background}
In the proof of the main result, we will make use of geometric and topological results from past studies of skew fibrations (\cite{Harrison}, \cite{Salvai}) and a strong tightness result for certain contact $3$-manifolds \cite{EtnyreEtal}. We collect these results here.

\subsection{Topology of skew fibrations}
\label{sec:topback} Let $F$ be a (continuous) fibration of $\R^3$ by skew, oriented lines, given by a vector field $V : \R^3 \to S^2$.  The fibration may be viewed locally as follows: given a point $o \in \R^3$, let $\ell$ be the oriented fiber through $o$ and let $\ell^\perp$ denote the affine subspace of $\R^3$ which passes through $o$ and is orthogonal to $\ell$.  There exists a neighborhood $E$ of $o$ in $\ell^\perp$ such that the fibers through points $p \in E$ are transverse to $\ell^\perp$.  In particular, fibers near $\ell$ may be written in terms of a map $B : E \to \R^2$, where $\R^2 \subset \R^3$ is a linear subspace parallel to $\ell^\perp$, and the fiber through $p$ is the graph of the affine map $\R \to \R^2 : t \mapsto p + tB(p)$.  That is, $B(p)$ gives the ``horizontal" (i.e.\ orthogonal to $\ell$) component of the direction of the fiber through $p$.  Outside of $E$, it is possible that fibers fail to be transverse to $\ell^\perp$, see fiber $\ell'$ in Figure \ref{fig:localfib} below.

\begin{figure}[h!t]
\centerline{
\includegraphics[width=3in]{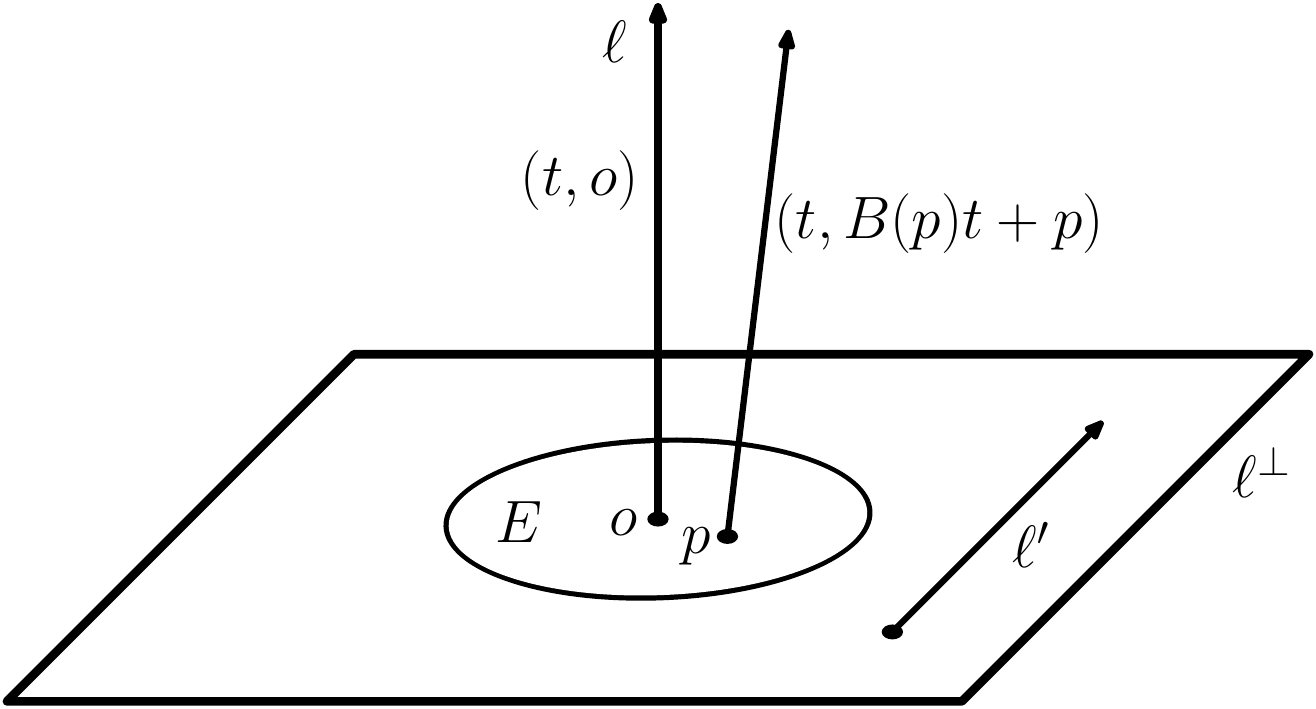}
}
\caption{Local depiction of a skew fibration}
\label{fig:localfib}
\end{figure}

When considering $B$ as a vector in $\R^3$, we use the notation ${\bf B}$.  This notation allows for an easy description in terms of the vector field $V$:
\[
{\bf B}(p) = \frac{1}{\langle V(p), V(o) \rangle}V(p) - V(o).
\]

Let $U \coloneqq V(\R^3)$ be the set of unit vectors which appear as directions of fibers from $F$.  It is not difficult to show that $U$ is an open, connected, contractible subset of $S^2$.  More importantly, $U$ is a convex subset of $S^2$ (see \cite{Harrison} and \cite{Salvai} for two different proofs of this fact).  In particular, for any skew fibration $F$, there exists a plane $P \subset \R^3$ which is transverse to all fibers from $F$.  Though the plane $P$ may not necessarily be unique, there is a canonical choice: since $U$ is convex, it has a unique circumcenter $u$; we choose $P$ to be the plane orthogonal to the fiber with direction $u$.

This choice is convenient because it varies continuously with the fibration.  In particular, the set of skew fibrations inherits a topology as a subset of the continuous maps $\R^3 \to S^2$. Then the image $U$ varies continuously with the fibration, and the circumcenter of a convex set varies continuously with the set (for a proof of this last statement, see e.g.\ \cite{GluckWarner}, Lemma 9.3).

Therefore, for any skew fibration, there corresponds a unique linear plane $P$, and the fibration may be written globally as a map $B : P \to \R^2$.

Given $p \in P$, let $d(p)$ be the (minimum) distance from the origin to the fiber through $p$.

\begin{lem}[(Harrison \cite{Harrison})]
\label{lem:harrison}
A continuous map $B : \R^2 \to \R^2$ corresponds to a skew fibration, in the manner described above, if and only if
\begin{itemize}
\item for any distinct points $p$ and $q$ in $\R^2$, $\det\big( \ p-q \ \ \ B(p)-B(q) \ \big) \neq 0$, and
\item if $p_n$ is a sequence of points in $\R^2$ with no accumulation points, then $d(p_n) \to \infty$.
\end{itemize}
Conversely, any skew fibration can be described by such a $B$ on a suitable $\R^2 \subset \R^3$. 
\end{lem}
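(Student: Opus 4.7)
The plan is to prove each direction of the equivalence separately, using the canonical plane $P$ constructed just above the lemma.

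For the forward direction, given a skew fibration with vector field $V$, identify $P$ with $\R^2$ and define $B : \R^2 \to \R^2$ so that the fiber through $p \in P$ has direction proportional to $(B(p), 1)$, where the last coordinate is along the normal to $P$. The determinant condition is then elementary algebra: two fibers $\ell_p$ and $\ell_q$ meet at a common height iff $p - q = t(B(q) - B(p))$ for some $t$, and they are parallel iff $B(p) = B(q)$; both outcomes amount to linear dependence of $p - q$ and $B(p) - B(q)$, so skewness yields the first condition. The second condition (that $d(p_n) \to \infty$ whenever $p_n \to \infty$ in $\R^2$) is the technical heart of the forward direction. I would argue by contradiction: if $d(p_n) \leq M$ but $|p_n| \to \infty$, take nearest points $q_n \in \ell_{p_n}$ to the origin with $|q_n| \leq M$, pass to a subsequence converging to some $q_\infty \in \R^3$, and use continuity of $V$ together with transversality of $V(q_\infty)$ to $P$ to conclude that, writing $p_n = q_n + s_n V(q_n)$ and imposing $p_n \in P$, the scalars $s_n$ are uniformly bounded, contradicting $|p_n| \to \infty$.

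For the reverse direction, given $B$ satisfying the two conditions, form the affine lines $\ell_p = \{(p + tB(p), t) : t \in \R\} \subset \R^3$. Pairwise disjointness and nonparallelism of these lines follow from reversing the algebra in the forward determinant check. The main step is to show the $\ell_p$ cover $\R^3$, equivalently that for each fixed $t$, the map $\Phi_t : \R^2 \to \R^2$, $p \mapsto p + tB(p)$, is surjective. Continuity is given, injectivity follows from the determinant condition, and properness follows from the second condition via the estimate $d(p) \leq \sqrt{|\Phi_t(p)|^2 + t^2}$: if $\Phi_t(p_n)$ stays bounded then so does $d(p_n)$, hence by the contrapositive of the second condition $p_n$ has an accumulation point. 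Brouwer's invariance of domain then makes $\Phi_t$ open, properness makes its image closed, and connectedness of $\R^2$ forces it to be a homeomorphism onto.

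The main obstacle is the forward direction's properness claim. The obstruction is genuinely global: purely local information about $B$ near a single fiber does not suffice, and the argument must invoke the continuity of $V$ on all of $\R^3$ together with uniform transversality of fibers to $P$ on compact subsets, both of which are consequences of the preceding discussion about the canonical convex set $U \subset S^2$.
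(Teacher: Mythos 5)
The paper offers no proof of Lemma \ref{lem:harrison}: it is quoted from \cite{Harrison} as background, so there is no internal argument here to compare against. Your proof is correct and follows essentially the route of the cited source. In the forward direction, the identification of skewness of $\ell_p$ and $\ell_q$ with $\det\big(\ p-q \ \ B(p)-B(q)\ \big) \neq 0$ is exactly right, and your contradiction argument for the properness condition is valid: writing $p_n = q_n + s_n V(q_n)$ with $p_n \in P$ gives $s_n = -\langle q_n, u\rangle / \langle V(q_n), u\rangle$, whose denominator is bounded away from zero along the convergent subsequence because $V(q_\infty)$ is transverse to $P$; this is precisely where the global input (convexity of $U$ and the existence of the common transversal plane) enters, as you observe. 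In the reverse direction, the combination of injectivity (from the determinant condition), properness (from $d(p) \leq \sqrt{|\Phi_t(p)|^2 + t^2}$ and the second bullet), and invariance of domain correctly yields surjectivity of each $\Phi_t$ and hence the covering property. Two small points you leave implicit, neither a gap of substance: first, to conclude that the family $\{\ell_p\}$ is a fibration ``in the manner described above'' you should also note that the induced direction field $V : \R^3 \to S^2$ is continuous, which follows by applying the same properness argument to the full map $(p,t) \mapsto (p + tB(p), t)$, a continuous proper bijection of $\R^3$ and hence a homeomorphism; second, in the forward direction one should record that every fiber actually meets $P$ (immediate, since a line transverse to a plane is not parallel to it), so that $B$ defined on $P$ describes the whole fibration.
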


The first item corresponds to skewness of the fibers through $p$ and $q$, and the second ensures that the collection of lines emanating from the domain $\R^2$ actually cover all of $\R^3$.

Given a skew fibration, consider the map $\R^2 \times \R^2 - \Delta(\R^2) \to \R$ which sends a pair of distinct points $(p,q)$ to $\det\left( \ p - q \ \ \ B(p) - B(q) \ \right)$.  Since the domain is connected and the image does not contain $0$, the image consists only of positive numbers or only of negative numbers.  Thus each fibration can be labeled as positively oriented or negatively oriented, according to the sign of the determinant.

\begin{example} Choose an oriented linear plane $P \simeq \R^2 \subset \R^3$.  Then the positively (resp. negatively) oriented Hopf fibration on $P$ is defined by $B(p) = ip$ (resp. $B(p) = -ip$).
\label{ex:hopf}
\end{example}

The following Continuity at Infinity property describes the behavior of skew fibrations at infinity.

\begin{lem}[(Harrison \cite{Harrison})]
\label{lem:contatinf}
Given a skew fibration $F$, let $U \subset S^2$ be the set of directions of fibers from $F$.  Let $u \in U$, let $p_n$ be a sequence of points in $\R^3$, and let $u_n \in U$ be the direction of the fiber through $p_n$.  If $|p_n| \to \infty$ and $\frac{p_n}{|p_n|} \to u$, then $u_n \to u$.
\end{lem}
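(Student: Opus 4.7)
The plan is to use the global representation of the skew fibration by a map $B \colon P \to \R^2$ as in Lemma \ref{lem:harrison}. Choose coordinates so $P = \{z = 0\}$; then the fiber through $q \in P$ has unit direction $V(q) = (B(q), 1)/|(B(q), 1)|$, and every fiber meets $P$ at a unique base point. In particular, for each $p_n = (x_n, y_n, z_n)$ there is a unique $q_n' \in P$ satisfying $q_n' + z_n B(q_n') = (x_n, y_n)$, and $V(p_n) = V(q_n')$. Let $q_0$ denote the base point corresponding to the direction $u$, so that $B(q_0)$ is the image of $u$ under gnomonic projection to $\R^2$.

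The crux of the argument is to show that the sequence $\{q_n'\}$ is bounded. Granted this, pass to a subsequential limit $q_n' \to q^*$; continuity of $B$ gives $V(p_n) \to V(q^*)$. Writing $p_n = (q_n', 0) + t_n V(q_n')$ for the fiber parameter $t_n$, the boundedness of $q_n'$ together with $|p_n| \to \infty$ forces $|t_n| \to \infty$, so $p_n/|p_n| \to \pm V(q^*)$. Since $u$ and $V(q^*)$ both lie in $U$, which sits in the open hemisphere above $P$, the sign must be $+$, and hence $V(q^*) = u$, giving $V(p_n) \to u$ as required.

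To argue boundedness of $q_n'$, suppose for contradiction $|q_n'| \to \infty$ along a subsequence. By item (1) of Lemma \ref{lem:harrison}, $B$ is a continuous injection of $\R^2$ into $\R^2$, so invariance of domain makes it a homeomorphism onto its open image $B(\R^2) \subset \R^2$; and item (2) forces $d(q_n') \to \infty$. Dividing the identity $q_n' + z_n B(q_n') = (x_n, y_n)$ by $z_n$ (noting $z_n \to \infty$ since $u$ has positive third coordinate) and using $p_n/|p_n| \to u$ yields the key asymptotic identity
\[
q_n'/z_n + B(q_n') \to B(q_0).
\]
Any subsequential limit $\mu$ of $B(q_n')$ cannot lie in $B(\R^2)$, for otherwise $q_n' \to B^{-1}(\mu)$ would be bounded; hence either $|B(q_n')| \to \infty$ or $\mu \in \partial B(\R^2)$, and in both cases $u^* := \lim V(q_n')$ is a point of $\partial U$ distinct from $u$.

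I expect the main obstacle to be closing the contradiction in this boundary regime. The strategy is to combine three asymptotic constraints: the relation $q_n'/z_n \to B(q_0) - \mu \neq 0$, which pins down the direction along which $q_n'$ escapes in $P$; the skewness sign condition $\det(q_n' - q_0, B(q_n') - B(q_0)) > 0$ from item (1), whose leading-order determinant vanishes because the two arguments become antiparallel in the limit, thereby constraining the subleading behavior; and the distance growth $d(q_n') \to \infty$ from item (2). Together these should prove incompatible with $u$ being an interior point of $U$ rather than a boundary point like $u^*$, producing the contradiction and completing the proof.
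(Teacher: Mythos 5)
The paper does not prove Lemma \ref{lem:contatinf}; it is imported from \cite{Harrison}. Judged on its own terms, your proposal has a genuine gap at its central step. The reduction is fine: Step 2 (bounded base points $q_n'$ imply $u_n\to u$) is correct, and the identity $q_n'/z_n + B(q_n') \to B(q_0)$ is the right normalization. But the contradiction in the regime $|q_n'|\to\infty$ is never derived, and the three constraints you propose to combine are in fact mutually consistent, so no contradiction can come from them alone. Concretely, take $q_n' = n^2\delta$, $z_n = n^2$, $B(q_n') = \mu + \tfrac{C}{n^2}\,i\delta$ with $\mu = B(q_0)-\delta$ and $C>0$ large: then $q_n'/z_n \to \delta \neq 0$, the sign condition against $q_0$ holds since $\det(q_n'-q_0,\,B(q_n')-B(q_0)) = C|\delta|^2 + O(1) > 0$, and $d(q_n')\to\infty$ automatically (the rescaled fibers converge to a line missing the origin). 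So the "subleading behavior" is not constrained in the way you hope, and testing skewness only against the single point $q_0$ (or against other points of the escaping sequence) cannot close the argument. (A second, smaller issue: in the regime $|B(q_n')|\to\infty$ there is no finite $\mu$, and your asymptotic identity $q_n'/z_n \to B(q_0)-\mu$ must be replaced by a separate analysis.)

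The missing idea is to use the skewness sign condition against \emph{every} fiber simultaneously, together with openness of the image. For any fixed $q\in\R^2$, divide $\det(q_n'-q,\,B(q_n')-B(q))>0$ by $z_n$ and pass to the limit: one gets $\det(\delta,\,\mu - B(q))\ge 0$ for all $q$, i.e.\ $B(\R^2)$ lies in a closed half-plane whose boundary line passes through $\mu$ in the direction $\delta$. Since $B(q_0)=\mu+\delta$ lies on that boundary line while $B(\R^2)$ is open (invariance of domain, which you already invoked but only to exclude $\mu$ from the image), this is a contradiction. Even cleaner, and handling both regimes at once: the first bullet of Lemma \ref{lem:harrison} is equivalent to a constant linking sign $\langle p_n - a',\, u_n\times u'\rangle>0$ for every fiber $(a',u')$; dividing by $|p_n|$ and using $p_n/|p_n|\to u$ gives, for any subsequential limit $w\ne u$ of $u_n$, that $\langle u\times w,\, u'\rangle\ge 0$ for all $u'\in U$, so $U$ lies in a closed hemisphere whose boundary great circle contains $u$ --- contradicting the openness of $U$ at $u$. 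Either version supplies the step your plan leaves open.
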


For example, if $u$ is the vertical fiber in the Hopf fibration of Figure \ref{fig:hyper}, and if $p_n$ is a sequence of points which is ``eventually vertical" in the sense of Lemma \ref{lem:contatinf}, then the directions of the fibers through $p_n$ converge to $u$.

Continuity at Infinity does not necessarily hold for non-skew fibrations; see Example \ref{ex:fibs}(v).

\subsection{Geometry of nondegenerate fibrations}
\label{sec:salvai}
Here we discuss nondegenerate fibrations as studied in \cite{Salvai}.  An oriented affine line in $\R^3$ is an element of the oriented affine Grassmann manifold $\widetilde{AG}_1(3)$, which we will associate with $TS^2$.  Indeed, any oriented affine line $\ell$ may be uniquely described by the pair $(u,v)$, where $u \in S^2$ is the unit direction of $\ell$ and $v \perp u$ is the nearest point from $\ell$ to the origin.  So $\ell = (u,v) \in TS^2$.

There is a canonical pseudo-Riemannian metric of signature $(2,2)$ on $TS^2$ (studied in detail, as an indefinite K\"{a}hler metric on the space of oriented lines, by Guilfoyle and Klingenberg \cite{GuilfoyleKlingenberg}).  For $\zeta \in T_{(u,v)}TS^2$, define the square norm
\[
\mathscr{\bf Q} (\zeta) \coloneqq \langle \zeta_1 \times u, \zeta_2 \rangle = \det\big( \ \zeta_1 \ \ u \ \ \zeta_2 \ \big),
\]
where $\zeta_1, \zeta_2 \in u^\perp$ are the horizontal and vertical components of $\zeta$, $\times$ is the cross-product in $\R^3$, and $\langle \cdot, \cdot \rangle$ is the Euclidean inner product on $\R^3$.  We say that a surface $M$ in $TS^2$ is \emph{definite} if $\mathscr{\bf Q} \big|_M$ is definite; that is, for any $(u,v) \in M$ and any $\zeta \in T_{(u,v)}M$, $\mathscr{\bf Q}(\zeta) = 0 \Leftrightarrow \zeta = 0$.

\begin{thm}[(Salvai \cite{Salvai})]
\label{thm:salvai}
A surface $M \subset TS^2$ is the space of fibers of a nondegenerate smooth fibration of $\R^3$ by oriented lines if and only if $M$ is a closed (in the relative topology), definite, connected submanifold of $TS^2$.
\end{thm}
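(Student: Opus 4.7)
The plan is to handle both directions through a natural parametrization of lines. For the forward implication, start with a nondegenerate line fibration given by $V \colon \R^3 \to S^2$ and consider the map $\Phi \colon \R^3 \to TS^2$ sending $p$ to the oriented line through $p$, namely $\Phi(p) = (V(p),\, p - \langle p, V(p)\rangle V(p))$. Two points have the same image iff they lie on the same fiber, so $M \coloneqq \Phi(\R^3)$ is the space of fibers, connected since $\R^3$ is. Because $V$ is constant on fibers, $\nabla V(V) = 0$, so $\ker d\Phi \supset \mathrm{span}(V)$; nondegeneracy gives equality, and $\Phi$ descends to an injective immersion of the quotient, readily seen to be an embedding, making $M$ a smooth $2$-submanifold. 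Closedness: if $\ell_n \in M$ converges to $\ell_\infty = (u_\infty, v_\infty)$ in $TS^2$, then the fiber through $v_\infty$ has direction $V(v_\infty) = \lim V(v_n) = u_\infty$ by smoothness of $V$, so it equals $\ell_\infty$.

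For definiteness, parametrize a tangent vector to $M$ at $\ell = (u,v)$ by a curve $p(t)$ in $\R^3$ transverse to $\ell$ with $\langle p(0), u \rangle = 0$. A short calculation shows that the induced tangent vector in $TS^2$ has horizontal and vertical components $\zeta_1 = \nabla V(\dot p_h)$ and $\zeta_2 = \dot p_h$, where $\dot p_h$ is the projection of $\dot p$ onto $V^\perp$. Hence $\mathscr{\bf Q}(\zeta) = \det(\nabla V(\dot p_h),\, V,\, \dot p_h)$, which vanishes precisely when $\dot p_h$ is a real eigenvector of $A \coloneqq \nabla V|_{V^\perp}$. Nondegeneracy excludes $0$ as an eigenvalue of $A$; nonzero real eigenvalues are excluded by the Riccati-type identity
\[
\nabla V|_{p + tV(p)}\bigl(w + t\,\nabla V|_p(w)\bigr) = \nabla V|_p(w),
\]
obtained by differentiating the line-invariance $V(p(s) + tV(p(s))) = V(p(s))$ at $s=0$ along $p(s) = p + sw$. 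If $A$ had a real nonzero eigenvalue $\lambda$ with eigenvector $w$, then at $t = -1/\lambda$ the left side vanishes while the right equals $\lambda w \ne 0$, a contradiction. Therefore $A$ has a complex-conjugate pair of nonzero eigenvalues, so $M$ is definite.

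For the converse, given a closed, connected, definite surface $M \subset TS^2$, form the incidence variety $L = \{(p, \ell) \in \R^3 \times M : p \in \ell\}$. The projection $\pi_2 \colon L \to M$ realizes $L$ as a smooth line bundle over $M$, so $L$ is a connected smooth $3$-manifold. I would show that the other projection $\pi_1 \colon L \to \R^3$ is a diffeomorphism. Writing $p$ as the point at parameter $s$ along $\ell = (u,v)$, a vector in $\ker d\pi_1$ corresponds to $(\dot u, \dot v) \in T_\ell M$ satisfying $\dot v = -s\dot u$, so $\mathscr{\bf Q}(\dot u, \dot v) = -s\det(\dot u, u, \dot u) = 0$, and definiteness forces $(\dot u, \dot v) = 0$. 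Thus $\pi_1$ is a local diffeomorphism; closedness of $M$ combined with the bound $|v| \le |p|$ for $p \in (u,v)$ makes $\pi_1$ proper; and a proper local diffeomorphism from connected $L$ to the simply connected $\R^3$ is a diffeomorphism. Defining $V(p)$ as the direction of the unique line $\pi_2(\pi_1^{-1}(p))$ produces a smooth unit vector field whose integral curves are the fibers in $M$, and nondegeneracy follows from definiteness by the forward computation. The main obstacle is precisely the equivalence between nondegeneracy (no zero eigenvalue of $A$) and definiteness (no real eigenvector of $A$), which is a priori strictly stronger; the Riccati identity above, valid globally along every fiber, is the mechanism upgrading the pointwise first-order condition to the global one.
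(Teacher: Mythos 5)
The paper does not prove this statement: it quotes it as a background result of Salvai \cite{Salvai}, so there is no in-paper proof to compare against. The closest in-paper relative is the Claim inside the proof of Lemma \ref{lem:noreal}, whose identity $Q(h) = \langle u(p), V_0 \rangle^2 f^*\mathscr{\bf Q}(h)$ is the planar-coordinate version of your horizontal/vertical computation. Judged on its own, your proposal is a sound reconstruction of the classification, and it correctly isolates the one genuinely nontrivial step: nondegeneracy a priori only says that $A = \nabla V|_{V^\perp}$ has no kernel, whereas definiteness of $\mathscr{\bf Q}|_M$ says $A$ has no \emph{real} eigenvalue, and the bridge is the identity $\nabla V|_{p+tV(p)}\bigl(w + t\,\nabla V|_p(w)\bigr) = \nabla V|_p(w)$ evaluated at $t=-1/\lambda$, which uses that the fibers are complete lines. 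This is precisely the global input whose necessity the paper emphasizes after Theorem \ref{thm:main}: for the radial field on $\R^3 - \{0\}$ the integral curves are incomplete rays, and $t=-1/\lambda$ can leave the domain, so the argument (correctly) fails there.

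Two details to tighten, neither fatal. In the tangent computation, $\tfrac{d}{dt}v(p(t))$ has a component along $u$ that you silently discard; the vertical component $\zeta_2$ is by definition the projection of this derivative onto $u^\perp$, and at a general point $p = v + su$ of the fiber it equals $\dot p_h - s\,\nabla V(\dot p_h)$ rather than $\dot p_h$. The extra term is proportional to $\zeta_1$ and drops out of the determinant, which is exactly what makes $\mathscr{\bf Q}(df_p h)$ independent of $s$ and lets you conclude nondegeneracy at \emph{every} point of a fiber (not only at the footpoint) in the converse direction; this should be said. Second, ``readily seen to be an embedding'' deserves a sentence: the footpoint map $\ell \mapsto v$ is a continuous section of $\Phi$ into $\R^3$, so $\Phi$ descends to a homeomorphism from the leaf space onto $M$.
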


\subsection{Main technical lemma}  As discussed in Section \ref{sec:mainthm}, there can be no purely local proof that nondegeneracy implies contact.  To be precise, the nondegeneracy condition on $V$ is equivalent to the condition that $dB$ has full rank, but this is not enough to show that the induced plane distribution is contact (see the discussion following Theorem \ref{thm:main}).  However, one can use globality of the fibration to obtain the definiteness property of Theorem \ref{thm:salvai}, from which the contact condition will follow.  We present an intermediate lemma which incorporates this definiteness into Lemma \ref{lem:harrison}.  The statements below are proven in Section \ref{sec:proofs}.

Given $p \in \R^2$, recall that $d(p)$ refers to the (minimum) distance from the origin to the fiber through $p$.

\begin{lem}
\label{lem:noreal}
A smooth map $B : \R^2 \to \R^2$ corresponds to a nondegenerate smooth line fibration if and only if
\begin{itemize}
\item At any point $p \in \R^2$, the linear map $dB_p$ has no real eigenvalues.
\item if $p_n$ is a sequence of points in $\R^2$ with no accumulation points, then $d(p_n) \to \infty$.
\end{itemize}
Conversely, any nondegenerate fibration can be described by such a $B$ on a suitable $\R^2 \subset \R^3$. 
\end{lem}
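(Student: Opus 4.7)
The plan is to bridge Lemma \ref{lem:harrison} with Theorem \ref{thm:salvai} by translating Salvai's definiteness of the fiber surface $M \subset TS^2$ into an algebraic condition on $B$, and by connecting the infinitesimal eigenvalue condition to global skewness via a mean value argument.

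First I would establish that ``no real eigenvalues'' implies the skewness hypothesis of Lemma \ref{lem:harrison}. If $B(p) - B(q)$ were parallel to $p - q$ for some distinct $p,q$, setting $v = q - p$ and fixing any nonzero $v^\perp$ orthogonal to $v$, the function $g(s) = \langle B(p + sv), v^\perp\rangle$ would satisfy $g(0) = g(1)$, so the mean value theorem would produce $s^* \in (0,1)$ with $dB_{p + s^*v}(v)$ parallel to $v$, contradicting the hypothesis. Combined with the distance bullet, this yields a skew fibration via Lemma \ref{lem:harrison}, with fiber surface $M \subset TS^2$ parametrized by $\phi(p) = (u(p), v(p))$, where $u_f = u_0 + B$, $u = u_f/|u_f|$, and $v(p) = p - \langle p, u\rangle u$.

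The heart of the argument is the direct computation of $\phi^* \mathscr{\bf Q}$. The horizontal--vertical decomposition of $d\phi_p(w) \in T_{(u,v)}TS^2$ has horizontal part $\zeta_1 = du_p(w) = |u_f|^{-1}\Pi_{u^\perp}(dB_p(w))$ and vertical part $\zeta_2 = \Pi_{u^\perp}(w) - \langle p, u\rangle \zeta_1$, where $\Pi_{u^\perp}$ denotes orthogonal projection. Substituting into $\mathscr{\bf Q}(d\phi_p(w)) = \det(\zeta_1, u, \zeta_2)$, the $\zeta_1$ self-term drops by alternation; expanding $u = (u_0 + B(p))/|u_f|$ and using that $dB_p(w), B(p), w$ all lie in the plane $P$ (so their joint triple determinant vanishes) simplifies the expression to $\pm|u_f|^{-2}\det_P(w, dB_p(w))$. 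Hence $M$ is definite in Salvai's sense if and only if $\det_P(w, dB_p(w)) \neq 0$ for all nonzero $w$ at every $p$, if and only if $dB_p$ has no real eigenvalue.

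Both directions of the lemma then follow from Theorem \ref{thm:salvai}. If the fibration is nondegenerate, then by \cite{Salvai}, Lemma~6 it is skew, so Lemma \ref{lem:harrison} furnishes $B$ with the distance bullet, and definiteness of $M$ (Theorem \ref{thm:salvai}) yields the eigenvalue bullet via the computation above. Conversely, given both bullets, the mean value step produces a skew fibration whose fiber surface $M$ is connected (image of $P$), definite (by the computation), and closed in $TS^2$ (by the distance condition, which prevents limit fibers from escaping $M$), so Theorem \ref{thm:salvai} gives nondegeneracy. The principal obstacle is the cancellation-heavy $\phi^*\mathscr{\bf Q}$ calculation; once the horizontal--vertical splitting is handled carefully, the rest of the argument is largely formal.
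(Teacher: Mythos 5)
Your proposal is correct and follows essentially the same route as the paper: parametrize the fiber surface $M \subset TS^2$ by $p \mapsto (u(p), v(p))$, compute the pullback of Salvai's form $\mathscr{\bf Q}$ to find it is a positive multiple (namely $\langle u(p), V_0\rangle^2 = |u_f|^{-2}$) of $\det(h, dB_p h)$, and conclude via Theorem \ref{thm:salvai}; your determinant simplification matches the paper's equations for $Q$ and $f^*\mathscr{\bf Q}$. The mean-value-theorem step deriving global skewness from the eigenvalue condition is a correct but inessential addition, since the paper bypasses Lemma \ref{lem:harrison} entirely by checking the closed/connected/definite hypotheses of Theorem \ref{thm:salvai} directly.
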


Note that $dB_p$ has no real eigenvalues if and only if, for all nonzero vectors $h \in T_p\R^2 = \R^2$, $\det\left( \ h \ \ dB_p h \ \right) \neq 0$.  That is, the no real eigenvalue condition is equivalent to the condition that the quadratic form $Q : T_p\R^2 \to \R : h \mapsto \det\left( \ h \ \ dB_p h \ \right)$ is definite.  The sign of the definiteness is consistent for all $p \in \R^2$, so we obtain an orientation for any smooth nondegenerate fibration, which agrees with the orientation defined for skew fibrations after Lemma \ref{lem:harrison}.  The proof of Lemma \ref{lem:noreal} involves showing that the definiteness of $Q$ is equivalent to the definiteness of nondegenerate fibrations in Theorem \ref{thm:salvai}.



We digress to state one other application of Lemma \ref{lem:noreal}: that the deformation retract of skew fibrations to Hopf fibrations, described in Section \ref{sec:intro}, preserves nondegeneracy in the following sense:

\begin{thm}
\label{thm:nondegeneratedefret}
The space of smooth nondegenerate fibrations of $\R^3$ deformation retracts to its subspace of Hopf fibrations.
\end{thm}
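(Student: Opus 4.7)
The plan is to verify that the deformation retract of the space of skew fibrations onto its subspace of Hopf fibrations, constructed in \cite{Harrison}, already preserves nondegeneracy. By Lemma \ref{lem:noreal}, this reduces to checking that the no-real-eigenvalue condition on $dB_p$ is preserved throughout the retraction.

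First I would fix an orientation component, say positive. By Lemma \ref{lem:harrison} together with the canonical choice of plane $P$ described in Section \ref{sec:topback}, each positively oriented skew fibration corresponds to a smooth map $B : \R^2 \to \R^2$, with the positively oriented Hopf fibration corresponding to $B_H(p) = ip$. The retraction from \cite{Harrison} may be expressed, in these $B$-coordinates, as the straight-line homotopy
\[
B_t(p) = (1-t)\, B(p) + t \cdot ip, \qquad t \in [0,1],
\]
which \cite{Harrison} already shows stays inside the space of skew fibrations (that is, both items of Lemma \ref{lem:harrison} persist in $t$). I would then show that each $B_t$ is nondegenerate whenever $B_0 = B$ is.

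Differentiating yields $d(B_t)_p = (1-t)\, dB_p + t \cdot i$, so the quadratic form $Q$ introduced after Lemma \ref{lem:noreal} satisfies
\[
Q_{B_t}(h) = \det\bigl(h,\ d(B_t)_p h\bigr) = (1-t)\, Q_B(h) + t\,|h|^2.
\]
If $B$ is positively oriented and nondegenerate, then $Q_B(h) > 0$ and $|h|^2 > 0$ for every nonzero $h$, so $Q_{B_t}(h) > 0$ for all $t \in [0,1]$ and all $p$. By Lemma \ref{lem:noreal}, $B_t$ is nondegenerate for each $t$, with the distance-at-infinity condition inherited from the skew property established in \cite{Harrison}.

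The main obstacle will be a careful treatment of the canonical plane $P$ along the deformation: because $P$ is determined by the circumcenter of the image $U = V(\R^3)$, it may vary continuously with $t$, and one must either work on a fixed reference plane (justified by the convexity of $U$, which permits a common transverse plane for all $B_t$) or transport $B_t$ along the continuous family of canonical planes. The retraction from \cite{Harrison} already handles this continuous choice for the skew case; since the first-order computation above depends only on $dB_p$ and is intrinsic to the fibration, it remains valid after any such reparameterization, so nondegeneracy will be preserved throughout the retract.
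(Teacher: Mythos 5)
Your proposal is correct and follows essentially the same route as the paper: both use the straight-line homotopy $B_t = (1-t)B + tH$ from \cite{Harrison} and observe that $\det\bigl(\,h \ \ d(B_t)_p h\,\bigr)$ is a convex combination of two quantities of the same sign, hence nonzero, so nondegeneracy persists by Lemma \ref{lem:noreal}. Your explicit computation $\det(h, ih) = |h|^2$ in the positively oriented case and your remark about the continuously varying canonical plane $P$ match the paper's treatment.
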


\subsection{Tightness in contact manifolds} \label{sec:tightness} We begin with definitions and notation from Section 5 of \cite{EtnyreEtal}.  For an oriented surface $S$ in a contact manifold $(\R^3, \xi)$, the characteristic foliation on $S$ can be obtained by integrating the singular line field given at $p \in S$ by $T_pS \cap \xi_p$.  A singularity of the characteristic foliation is a point $p$ for which $T_pS = \xi_p$.  Singularities of the characteristic foliation are positive or negative, depending on whether the orientation of $\xi_p$ matches or does not match that of $S$.  An oriented foliation on a sphere $S$ is \emph{simple} if it has exactly one singularity of each sign.  We will use the following result of Etnyre, Komendarczyk, and Massot, stated in the specific context of our setup in $\R^3$.

\begin{prop}[(Etnyre, Komendarczyk, Massot \cite{EtnyreEtal}, Proposition 5.3)]
\label{prop:tightness}
Let $B$ be a closed round ball of radius $r_0 > 0$, centered at the origin of the contact manifold $(\R^3, \xi)$.  If the characteristic foliation on every round sphere $S_r$, $0 < r \leq r_0$ is simple, and if $\xi|_B$ is overtwisted, then there is some radius $r_1$ such that the characteristic foliation on each $S_r$, $r \geq r_1$, has a closed leaf.
\end{prop}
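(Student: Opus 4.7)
The plan is to prove the contrapositive: assume there is no valid $r_1$, so that for every $r_1 \leq r_0$ there is some $r \in [r_1, r_0]$ with no closed leaf on $S_r$. This yields a sequence $r_n \to r_0$ such that each $S_{r_n}$ has a simple characteristic foliation (by hypothesis) with no closed leaves; I aim to derive tightness of $\xi|_B$, contradicting overtwistedness.

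A characteristic foliation on $S^2$ with exactly one positive singularity, one negative singularity, and no closed orbits is, by the Poincar\'e--Bendixson theorem applied to each hemisphere of the flow, topologically conjugate to the radial north-south flow: every non-singular leaf begins at the positive singularity and terminates at the negative one. This is precisely the signature of a convex surface in Giroux's sense, with a single dividing circle separating the two singularities. Hence each $S_{r_n}$ is a convex sphere whose dividing set has one component.

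By a foundational result of Giroux---together with Eliashberg's classification of tight contact structures on the 3-ball---a convex sphere in a contact 3-manifold whose dividing set is a single circle bounds a tight ball on the side it encloses. Thus the interior of each $S_{r_n}$, namely the open ball of radius $r_n$, carries a tight contact structure. Since any overtwisted disk $D \subset B$ is compact, there is some $\rho < r_0$ with $D \subset \{|x| \leq \rho\}$; choosing $n$ large enough that $r_n > \rho$ places $D$ inside a tight open ball, a contradiction.

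The main obstacle is the invocation of the ``tight ball'' criterion in the third step: specifically, establishing that a convex $S^2$ with a single dividing circle bounds a tight ball in the ambient contact 3-manifold. A priori, the contact structure in the interior could be overtwisted while the boundary displays a connected dividing set---ruling this out relies on either Eliashberg's uniqueness of tight contact structures on the 3-ball or a direct convex surface decomposition using bypass disks to show that any overtwisted disk must register as additional components in the dividing set of a nearby convex sphere. Once this step is in hand, the compactness reduction above completes the argument; a more hands-on alternative, likely closer to the actual proof of Etnyre--Komendarczyk--Massot, would directly analyze the bifurcation of the characteristic foliation along the family $\{S_r\}$ to locate the birth of closed leaves as $r$ increases.
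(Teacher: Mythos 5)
First, a point of order: the paper does not prove this proposition at all --- it is quoted verbatim from Etnyre--Komendarczyk--Massot \cite{EtnyreEtal} and used as a black box, so there is no internal proof to compare against. Judged on its own merits, your proposal has a genuine gap at its central step. The claim that ``a convex sphere in a contact 3-manifold whose dividing set is a single circle bounds a tight ball on the side it encloses'' is false. Giroux's criterion for a convex sphere with connected dividing set certifies tightness only of a product neighborhood $S \times (-\epsilon, \epsilon)$ of the sphere, not of the ball it bounds. A concrete counterexample: in an overtwisted $S^3$, take a small standard Darboux ball; its boundary is a convex sphere with connected dividing set and a north--south characteristic foliation, yet the complementary ball it bounds contains an overtwisted disk. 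This also shows why your proposed repairs cannot work: Eliashberg's theorem gives \emph{uniqueness of the tight} contact structure on $B^3$ inducing a given boundary foliation, but overtwisted structures inducing the very same boundary foliation exist (Eliashberg's classification of overtwisted structures produces them), so no condition on a single boundary sphere can certify tightness of its interior. Any correct proof must exploit the entire one-parameter family $\{S_r\}_{0 < r \leq r_0}$ --- starting from a genuinely tight Darboux ball at small $r$ and tracking the bifurcations of the movie of characteristic foliations --- which is exactly what \cite{EtnyreEtal} do; this is also why the hypothesis quantifies over all radii.

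Two secondary misalignments compound the problem. First, the contrapositive only hands you a \emph{sequence} $r_n$ of radii whose spheres lack closed leaves; it says nothing about the intermediate spheres, so even a correct family-based tightness criterion (``if no $S_r$ with $r \leq r_n$ has a closed leaf, then $B_{r_n}$ is tight'') would not apply to the balls $B_{r_n}$ you construct. Second, an overtwisted disk in the closed ball $B$ need not lie in any ball of radius $\rho < r_0$, so the final compactness reduction does not quite close even if the earlier steps held. Your Poincar\'e--Bendixson analysis in the second step is essentially fine, and your closing instinct --- that the real proof analyzes the birth of closed leaves along the family $\{S_r\}$ --- is correct; that analysis is not an optional alternative but the only viable route.
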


Next, we recall the Complete Connection Criterion (\cite{EliashbergThurston}, Proposition 3.5.6), which we believe can be used to prove tightness in the nondegenerate case.  Suppose that there exists a plane $P = \R^2 \subset \R^3$ and a contact structure $\xi$ on $\R^3$, such that $\xi$ is everywhere transverse to the fibers of the orthogonal projection $\pi : \R^3 \to P$.  Then $\xi$ may be thought of as a connection for this trivial bundle $P \times \R$, though parallel translation is not necessarily globally defined.  The connection $\xi$ is called \emph{complete} if parallel translation is globally defined; that is, for any smooth (or rectifiable) path in $P$ and any lift of its initial point, there is an extension to a lift which is tangent to $\xi$ (i.e.\ a lift to a Legendrian curve).  Evidently, this will fail if the lift of some path blows up in finite time, which could happen if for some sequence of points $p_n$ in $\R^3$, which diverge in the vertical direction (orthogonal to $P$), the contact planes at $p_n$ become increasingly vertical.

\begin{prop}[(Complete Connection Criterion)] If there exists a plane $P$ in $(\R^3, \xi)$ such that $\xi$ is a complete connection for the fibration $\R^3 \to P$, then $\xi$ is tight.
\end{prop}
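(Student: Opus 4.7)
I would argue by contradiction: assume $\xi$ is overtwisted and use completeness of the connection to derive an impossibility. Choose coordinates $(x,y,z)$ on $\R^3$ so that $P = \{z = 0\}$ and $\pi(x,y,z) = (x,y)$. Since $\xi$ is transverse to the vertical fibers of $\pi$, I can write a defining contact form as $\alpha = dz + \beta$, where $\beta$ is a $1$-form annihilating $\partial_z$. Completeness of the connection is exactly the assertion that the ODE $z'(t) = -\beta(\dot\gamma(t))$ has solutions defined for all $t$ in the domain of every smooth path $\gamma$ in $P$, for every initial condition in the fiber. The contact condition supplies a nowhere-zero ``curvature'' $2$-form $d\alpha|_\xi$ of definite sign on the horizontal distribution.

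My first step would be to use completeness to build a diffeomorphism $\Phi \colon P \times \R \to \R^3$ by radial parallel transport from the origin: $\Phi(p, z_0)$ is the endpoint of the horizontal lift of the straight-line segment from $0$ to $p$ starting at $(0, z_0)$. Completeness guarantees this is defined for every $(p, z_0)$, and smoothness of the connection together with uniqueness of ODE solutions makes $\Phi$ a global diffeomorphism. In the adapted coordinates $\Phi^{-1}$, every ray from the origin in $P$ lifts to a Legendrian curve at constant $z$, so $\xi$ is determined by a single scalar function encoding its angular behavior; this normal form resembles the standard $\ker(dz + r^2\,d\theta)$.

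From here the plan is to compare the adapted $\xi$ to a standard tight contact structure on $\R^3$, itself a complete connection for $\pi$, by interpolating through a path $\alpha_t$ of contact $1$-forms that each define a complete connection, and applying Gray's stability to produce an ambient contactomorphism. The flow of the Moser vector field generating the isotopy can be integrated globally because the horizontal-lift property, preserved along the path, furnishes the $C^0$-bounds needed to rule out finite-time blow-up on the non-compact manifold. Since the standard contact structure is tight, this would contradict the existence of an overtwisted disk.

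The main obstacle is showing that completeness is genuinely preserved throughout the Moser interpolation, and that the resulting estimate is strong enough to integrate Gray's vector field globally on $\R^3$---Gray stability is well known to be delicate on non-compact manifolds, and this is where all the analytic content of the proposition sits. If that step resists, a more direct alternative would remain in the adapted coordinates of the second step and work with holonomy: for any closed Legendrian loop, uniqueness of horizontal lifts forces its projection to $P$ to have trivial holonomy, while a Stokes-type argument identifies this holonomy with the integral of the sign-definite $2$-form $d\alpha|_\xi$ over the enclosed region. The specific structure of an overtwisted disk---an elliptic center connected to a Legendrian boundary leaf by a radial fan in its characteristic foliation---should then allow one to exhibit a projected loop whose enclosed integral is necessarily nonzero, producing the required contradiction.
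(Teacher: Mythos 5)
First, a point of scope: the paper does not prove this proposition at all --- it is quoted from Eliashberg and Thurston (\cite{EliashbergThurston}, Proposition 3.5.6) and used as a black box, so there is no in-paper argument to compare yours against. Judged on its own terms, your proposal is a plan whose decisive step is left open, as you yourself acknowledge. Gray stability is a theorem about closed manifolds; on $\R^3$ the Gray/Moser vector field need not be complete, and the assertion that the horizontal-lift property ``furnishes the $C^0$-bounds needed to rule out finite-time blow-up'' is precisely the estimate you would have to prove --- indeed the paper explicitly declines Gluck's Gray-stability argument for this very reason (``we do not have Gray stability and must resort to other methods''). Your fallback has a more basic flaw: identifying the holonomy around a loop in $P$ with the integral of $d\alpha$ over the enclosed region is only valid when the connection form $\beta$ is independent of the fiber coordinate $z$; in general the vertical displacement solves a nonlinear ODE and Stokes' theorem in the base does not compute it. Moreover, exhibiting a closed Legendrian loop with nonzero ``enclosed curvature'' contradicts nothing: tight structures contain many closed Legendrian curves, and applying Stokes to the overtwisted disk $D$ itself only gives $\int_D d\alpha = \oint_{\partial D}\alpha = 0$, which is perfectly consistent because $d\alpha|_{TD}$ is sign-definite only at the singularities of the characteristic foliation, not on all of $D$.

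The good news is that your second step already contains a complete proof if you push it further instead of reaching for Gray stability. Radial parallel transport --- globally defined exactly because the connection is complete --- produces adapted coordinates in which $\alpha(\partial_r)\equiv 0$, hence $\alpha = dz + a(r,\theta,z)\,d\theta$; the contact condition forces $\partial_r a$ to have constant sign on $\{r>0\}$ (say positive), and smoothness along the axis gives $a = r^2 b$ with $b>0$. Then
\[
(r,\theta,z) \longmapsto \left( \sqrt{a(r,\theta,z)},\ \theta,\ z \right)
\]
is an injective immersion (injective in $r$ by monotonicity of $a$, smooth at the axis since $\sqrt{a} = r\sqrt{b}$) which pulls back $dZ + R^2\,d\Theta$ to $\alpha$. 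This is an explicit contact embedding of $(\R^3,\xi)$ into the standard tight contact structure, so tightness follows from Bennequin's theorem with no Moser-type argument and no holonomy computation. Completeness of the connection is used exactly once, to make the radial normal form global rather than merely local.
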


For a nondegenerate fibration, there exists a circumcenter $u \in U$ and a plane $P = u^\perp$ which is transverse to all fibers; equivalently, the contact planes are transverse to the fibers of the orthogonal projection $\pi : \R^3 \to P$.  We will call $u$ the vertical direction and $P$ horizontal.  The continuity at infinity of Lemma \ref{lem:contatinf} asserts that sequences of points which are ``eventually vertical" have fiber directions which converge to $u$.  Therefore, if $F \subset P$ is any compact set, then the set of directions $V(\pi^{-1}(F))$ is bounded away from horizontal; equivalently, the corresponding contact planes are sufficiently non-vertical.  Thus a finite time blow-up of a Legendrian lift should not occur.  To formalize this argument, one should apply a global existence and uniqueness result (\cite{Simmons}, Chap.\ 70, Theorem B) to the ODE governed by the Legendrian condition, with initial value given by the lift of the initial point.  The proof requires certain Lipschitz bounds and involves some technical estimates on derivatives, and we do not include the details here.

It is worth emphasizing the importance of the Continuity at Infinity condition.  There exist overtwisted contact structures which are transverse to all vertical lines (see the discussion following Proposition 3.5.6 of \cite{EliashbergThurston}), and so it is not sufficient to know that the fibers of a nondegenerate fibration are transverse to a common plane.

\section{Proofs}
\label{sec:proofs}

We begin with the proofs of the theorems, assuming the statement of Lemma \ref{lem:noreal}.

\begin{proof}[Proof of Theorem \ref{thm:main}]
Suppose $F$ is a nondegenerate smooth line fibration given by a smooth unit vector field $V$, and let $p \in \R^3$.  Let $\alpha$ be the $1$-form dual to $V$, and let $\ell^\perp$ denote the oriented plane containing $p$ and orthogonal to the vector $V(p)$.  As discussed at the beginning of Section \ref{sec:topback}, we may represent the fibration locally as a map $B : E \to \R^2$, where $E$ is a neighborhood of $p$ in $\ell^\perp$ such that the fibers through points of $E$ are transverse to $\ell^\perp$, and $\R^2 \subset \R^3$ is the linear subspace parallel to $\ell^\perp$.  For $B(p) \in \R^2$ and $h \in T_pE$, we use bold letters ${\bf B}(p)$ and ${\bf h}$ to represent the associated (by inclusion) vectors in $\R^3$.  This allows us to write, for $q \in E$, 
\[
{\bf B}(q) = \frac{1}{\langle V(q), V(p) \rangle}V(q) - V(p).
\]
We compute
\[
d{\bf B}_q h = \frac{1}{\langle V(q), V(p) \rangle}dV_q{\bf h} - \frac{\langle dV_q {\bf h}, V(p) \rangle}{\langle V(q), V(p) \rangle^2}V(q),
\]
where $h \in T_q E$.  Evaluating at $q = p$ yields
\[
d{\bf B}_p h = dV_p {\bf h},
\]
as $V$ is a unit vector field, and $dV_p {\bf h}$ is orthogonal to $V(p)$.  Recall, from the discussion following Lemma \ref{lem:noreal}, the quadratic form $Q(h) = \det\left( \ h \ \ dB_p h \ \right)$.  By the computation above, and by the fact that ${\bf h}$ and $d{\bf B}_p h$ are orthogonal to the oriented unit vector $V(p)$, we may write:
\[
Q(h) = \det\big( \ h \ \ dB_p h \ \big) = \det\big( \ {\bf h} \ \ dV_p{\bf h} \ \ V(p) \ \big) = \langle {\bf h} \times dV_p{\bf h}, V(p) \rangle,
\]
so that
\[
\operatorname{trace}(Q) = \langle \operatorname{curl}(V(p)), V(p) \rangle = \ast (\alpha(p) \wedge d\alpha(p)),
\]
where $\ast$ represents the Hodge star isomorphism.  By (the discussion following) Lemma \ref{lem:noreal}, $Q$ is definite, hence its trace is nonzero, and so $\alpha$ is a contact form. 

It remains to show that the contact structure is tight.  Because a nondegenerate fibration satisfies the hypothesis of Theorem \ref{thm:main2}, this is a consequence of the following proof.
\end{proof}

\begin{proof}[Proof of Theorem \ref{thm:main2}]  We will use the definitions and notation of Section \ref{sec:tightness}.

Consider $(\R^3, \xi)$, for which the contact structure $\xi$ is obtained from a line fibration, and let $\ell$ be a fiber which is parallel to no other fibers.  Choose a point on $\ell$ as the origin, and let $u$ be the oriented direction of $\ell$.  We first note that the characteristic foliation on every round sphere $S$ is simple.  Indeed, singularities occur at points $p \in S$ for which $T_pS = \xi_p$, or equivalently, points $p$ for which the fiber through $p$ passes through the origin.  So there are exactly two singular points of $S$, at the points where $\ell$ intersects $S$.  Therefore, by Proposition \ref{prop:tightness}, checking tightness amounts to checking that there exist no closed leaves of the characteristic foliation.  That is, we must check that no sphere contains a simple closed Legendrian curve.

Suppose that such a curve $C$ does exist, and note that $C$ is disjoint from the north and south poles.  Consider the height function, with respect to the direction $u$ of $\ell$ and restricted to $C$.  This function must have a critical point $q$, and the tangent line $T_qC$ is perpendicular to $u$.  Since this tangent line is contained in the contact plane $\xi_q$, the fiber through $q$ must be contained in the plane orthogonal to $T_qC$; in particular, this is the plane through the origin spanned by $u$ and the vector $q$.  But this plane also contains the line $\ell$, so the fiber through $q$ either intersects or is parallel to $\ell$, a contradiction.
\end{proof}

\begin{proof}[Proof of Theorem \ref{thm:nondegeneratedefret}]
We first recount the proof that the space of skew fibrations deformation retracts to its subspace of Hopf fibrations.  Given any skew fibration $F$, choose $P$ as described in Section \ref{sec:topback}: passing through the origin of $\R^3$ and orthogonal to the fiber $\ell$ with direction $u$, where $u$ is the circumcenter of the convex set $U$.  Parallel translate $F$ so that $P \cap \ell$ is the origin and represent this translated $F$ as a map $B$.  This translation is a path through skew fibrations which respects orientation.  Next define the straight-line homotopy $B_t(p) = (1-t)B(p) + tH(p)$, where $H$ is the unique Hopf fibration defined on $P$ whose orientation matches that of $B$ (see Example \ref{ex:hopf}).  It is shown in \cite{Harrison} that $B_t$ is a path through skew fibrations - in particular Lemma \ref{lem:harrison} holds for all $B_t$, $0 \leq t \leq 1$.  Since the plane $P$ (and therefore the Hopf fibration $H$ at the end of the path) was chosen to vary continuously with $F$, we obtain the result that the space of skew fibrations of $\R^3$ by oriented lines deformation retracts to its subset of Hopf fibrations.

It remains to show that the straight-line homotopy preserves nondegeneracy.  This is a consequence of the linearity of the map $\operatorname{Hom}(\R^2, \R^2) \to \R : L \mapsto \det(h,Lh)$ and the convexity of each connected component of $\R - \left\{ 0 \right\}$.  In particular,
\[
\det\big( \ h \ \ d(B_t)_p h \ \big) = (1-t)\det\big( \ h \ \ dB_ph \ \big) + t\det\big( \ h \ \ dH_p h \ \big) \neq 0,
\]
since each of the summands has the same sign.
\end{proof}


Given the possibility to describe any nondegenerate fibration by this map $B : \R^2 \to \R^2$, we suspect it may be possible to explicitly write the contactomorphism taking the corresponding contact structure to the Hopf one.

\begin{proof}[Proof of Lemma \ref{lem:noreal}] Let $B : P \to P$ be a smooth map, where $P \subset \R^3$ is an oriented, linear $2$-plane.  Let $V_0$ represent the oriented unit vector in $\R^3$ which is orthogonal to $P$ (note that $V_0$ occurs as a fiber direction if and only if $0$ is in the image of $B$).  We will use the boldface letters ${\bf p}$, ${\bf B}$, and ${\bf h}$, to represent vectors in $\R^3$ corresponding via inclusion to vectors of $P$ or $T_pP$.  Let $M \subset TS^2$ be the collection of lines corresponding to $B$.  In particular, the fiber $\ell$ through a point $p \in P$ corresponds to the pair $(u,v) \in M \subset TS^2$, where $u \in S^2$ is the unit direction of $\ell$ and $v \in T_uS^2 \simeq \ell^\perp$ is the point on $\ell$ nearest to the origin.  Using the above notation, we may write $M$ as the image of the map $f = (u,v) : P \to M$ as follows:
\[
u(p) = \frac{{\bf B}(p) + V_0}{\sqrt{|{\bf B}(p)|^2+1}} \in S^2 \subset \R^3,
\hspace{.2in}
v(p) = {\bf p}
-\langle {\bf p}, u(p) \rangle
u(p) \in \ell^\perp.
\]

We must show that the two bullet points of Lemma \ref{lem:noreal} hold if and only if $M$ corresponds to a nondegenerate smooth fibration, which occurs if and only if $M$ is a closed, connected, definite surface in $TS^2$ (by Theorem \ref{thm:salvai}).  The connectedness of $M$ follows from the continuity of $B$, and it is straightforward to show that the second bullet point of Lemma \ref{lem:noreal} is equivalent to the fact that $M$ contains its limit points.

Thus it only remains to show that $M$ is definite if and only if $dB_p$ has no real eigenvalues for every $p \in P$, which occurs if and only if, for every $p \in P$, the quadratic form $Q : T_pP \to \R : h \mapsto \det( \ h \ \ dB_ph \ )$ is definite.  First observe that $Q(h)$ may be rewritten as $\det( \ {\bf h} \ \ d{\bf B}_ph \ \ V_0 \ )$, since $V_0$ is the oriented unit vector orthogonal to $P$.

Consider the pullback, with respect to the map $f = (u,v)$ onto $M$, of the quadratic form $\mathscr{\bf Q}$ on $TS^2$.  In particular, $f^*\mathscr{\bf Q}$ maps $h \in T_pP$ to $\mathscr{\bf Q}(df_p h) = \det( \ du_p h \ \ u(p) \ \ dv_p h \ )$.  The following claim asserts that definiteness of $\mathscr{\bf Q}|_M$ is equivalent to definiteness of $Q$.

\begin{claim}$Q(h) = \langle u(p), V_0 \rangle^2 f^*\mathscr{\bf Q}(h).$
\end{claim}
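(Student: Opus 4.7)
The plan is to prove the identity by direct computation, exploiting two key simplifications. First, since $u(p)$ appears as the middle column of the triple product defining $\mathscr{\bf Q}(df_p h)$, any component along $u(p)$ in either outer column contributes nothing; so all derivative calculations can be carried out modulo $u(p)$. Second, the three vectors ${\bf h}$, $d{\bf B}_p h$, and ${\bf B}(p)$ all lie in the $2$-plane $P$, so their triple product in $\R^3$ vanishes automatically.

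Write $s = s(p) = \sqrt{|{\bf B}(p)|^2 + 1}$, so that $u(p) = ({\bf B}(p) + V_0)/s$. Since ${\bf B}(p) \in P$ and $V_0 \perp P$, this immediately yields the key numerical identity $\langle u(p), V_0 \rangle = 1/s$. Differentiating $u$ via the quotient rule and reducing modulo $u(p)$ collapses the calculation to
\[
du_p h \equiv \tfrac{1}{s}\, d{\bf B}_p h \pmod{u(p)}.
\]
For $v(p) = {\bf p} - \langle {\bf p}, u(p) \rangle u(p)$, the same reduction (throwing away every term proportional to $u(p)$) leaves
\[
dv_p h \equiv {\bf h} - \tfrac{\langle {\bf p}, u(p) \rangle}{s}\, d{\bf B}_p h \pmod{u(p)}.
\]

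Substituting these into $\mathscr{\bf Q}(df_p h) = \det(du_p h,\, u(p),\, dv_p h)$, the $d{\bf B}_p h$-term inside $dv_p h$ produces two parallel columns and contributes zero; what remains is $s^{-1}\det(d{\bf B}_p h,\, u(p),\, {\bf h})$. A cyclic permutation of the columns followed by the substitution $u(p) = ({\bf B}(p) + V_0)/s$ splits this into two pieces, and the piece involving ${\bf B}(p)$ vanishes because ${\bf h}$, $d{\bf B}_p h$, and ${\bf B}(p)$ are coplanar in $P$. The surviving piece is $s^{-2}\det({\bf h},\, d{\bf B}_p h,\, V_0) = s^{-2} Q(h)$. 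Recognizing $s^{-2} = \langle u(p), V_0 \rangle^2$ and rearranging yields the stated identity.

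There is no real obstacle beyond patient bookkeeping: the derivative of $v$ produces three correction terms a priori, and one must verify that the reduction modulo $u(p)$ collapses them to a single relevant piece. The conceptual content of the claim is that the coplanarity of ${\bf h}$, $d{\bf B}_p h$, and ${\bf B}(p)$ inside $P$ is exactly what makes $V_0$ emerge cleanly as the only surviving transverse direction in the determinant; this is what converts the intrinsic quadratic form $\mathscr{\bf Q}|_M$ into the trivialized form $Q$ on $T_pP$, up to the positive conformal factor $\langle u(p), V_0 \rangle^2$, from which the equivalence of the two definiteness conditions is immediate.
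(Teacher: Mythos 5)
Your proposal is correct and follows essentially the same computation as the paper: both arguments reduce $du_p h$ and $dv_p h$ modulo vectors already occupying a column of the determinant and invoke the coplanarity of ${\bf h}$, $d{\bf B}_p h$, and ${\bf B}(p)$ in $P$ (equivalently, orthogonality to $V_0$); you merely run the substitution in the opposite direction, expressing $f^*\mathscr{\bf Q}$ in terms of $Q$ rather than vice versa. One small caveat: your computation, exactly like the paper's own displayed equations (\ref{eqn:q}) and (\ref{eqn:Q}), actually yields $Q(h) = \langle u(p), V_0 \rangle^{-2} f^*\mathscr{\bf Q}(h)$, so the claim as printed has the positive conformal factor on the wrong side --- this is immaterial for the intended conclusion (equivalence of the two definiteness conditions), but your closing phrase ``rearranging yields the stated identity'' silently absorbs that discrepancy rather than flagging it.
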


To compute $Q(h) = \det( \ {\bf h} \ \ d{\bf B}_ph \ \ V_0 \ )$, observe that as the first two vectors are each orthogonal to $V_0$, we may replace $V_0$ by $\frac{1}{\langle u(p), V_0 \rangle} u(p)$, since their components in the direction of $V_0$ are both equal $1$.  Now we write ${\bf B}(p) = \frac{1}{\langle u(p), V_0 \rangle}u(p) - V_0$, as in the proof of Theorem \ref{thm:main}.  We compute
\[
d{\bf B}_ph = \frac{1}{\langle u(p), V_0 \rangle} du_p(h) - \frac{\langle du_p(h), V_0 \rangle}{\langle u(p), V_0 \rangle^2}u(p),
\]
and note that the second term does not contribute to the determinant, due to our previous replacement of $V_0$.  Thus we have
\begin{align}
\label{eqn:q}
Q(h) = \det\big( \ {\bf h} \ \ d{\bf B}_ph \ \ V_0 \ \big) = \frac{1}{\langle u(p), V_0 \rangle ^2} \det\big( \ {\bf h} \ \ du_p(h) \ \ u(p) \ \big).
\end{align}
Let us also compute $f^*\mathscr{\bf Q}(h)$, beginning with
\[
dv_p h = {\bf h} - \langle {\bf h}, u(p) \rangle u(p) - \langle {\bf p}, du_p h \rangle u(p) -\langle {\bf p}, u(p) \rangle du_p h.
\]
The final three terms do not contribute to the determinant, hence:
\begin{align}
\label{eqn:Q}
f^*\mathscr{\bf Q}(h) = \det\big( \ du_p h \ \ u(p) \ \ dv_p h \ \big) = \det\big( \ du_p h \ \ u(p) \ \ {\bf h} \ \big).
\end{align}
Combining (\ref{eqn:q}) and (\ref{eqn:Q}) yields the desired result.
\end{proof}

\section{Examples}
\label{sec:examples}

We conclude with three examples.

\begin{example}[(A smooth, skew, degenerate fibration)]
\label{ex:skew}
Define $B : \R^2 \to \R^2$ as $B(p_1,p_2) = (-p_2^k,p_1^k)$ for any odd $k > 1$.  Then $B$ is a smooth map satisfying the hypotheses of Lemma \ref{lem:harrison}, so $B$ corresponds to a skew fibration.  However, the linear map $dB_{(0,0)}$ is the zero map, which has $0$ as a real eigenvalue.  Hence by Lemma \ref{lem:noreal}, $B$ corresponds to a degenerate fibration.  Finally, to show that the induced fibration $V : \R^3 \to S^2$ is smooth, we may define
\[
V(x,y,z) \coloneqq \frac{1}{1+p_1^{2k} + p_2^{2k}}\left( -p_2^k, p_1^k, 1 \right),
\]
where $p : \R^3 \to P$ is the map sending $(x,y,z)$ to the unique point at which $P$ intersects the fiber through $(x,y,z)$.  This map $p = (p_1,p_2)$ is defined implicitly by the equations
\[
x = p_1 - zp_2^k \hspace{.25in} y = p_2 + zp_1^k.
\]
We define $\Phi : \R^5 \to \R^2$ as $\Phi(x,y,z,p_1,p_2) = (p_1 - zp_2^k - x, p_2 + zp_1^k - y)$.  Then $\Phi$ is a smooth map, and we compute
\[
\left(\frac{\partial \Phi}{\partial p}\right) = \left( \begin{array}{cc}
1 & -kzp_2^{k-1} \\
kzp_1^{k-1} & 1
\end{array}
\right),
\]
which has nonzero determinant $1 + k^2z^2(p_1p_2)^{k-1}$ (since $k$ is odd).  Therefore by the Implicit Function Theorem, $p$ is a smooth function of $(x,y,z)$ and hence $V$ is smooth.

Note further that if we define the $1$-form $\alpha$ as in the proof of Theorem \ref{thm:main}, so that the plane field induced by $B$ is the kernel of $\alpha$, then $d\alpha(0) = 0$, so the distribution is not contact.  This example highlights the difference between the first-order nondegeneracy condition and the zero-order skew condition (compare with the family of functions $f_k : \R \to \R : x \mapsto x^k$, odd $k>1$; these are smooth maps which are topological embeddings but not immersions).
\end{example}

\begin{example}[(A nonskew fibration corresponding via Theorem \ref{thm:main2} to a tight contact structure)]
The example arises by gluing together the line fibration corresponding to the standard contact structure $dz - y \ dx$ with a line fibration corresponding to a Hopf-like contact structure.  In particular, we note that for both fibrations, the fiber through the point $(0,y,0)$ has direction $(-y,0,1)$.  As $y$ ranges, the collection of such fibers disconnects $\R^3$, and so we may cover one component with Hopf-like fibers and the other component with the fibers from the standard fibration.  In particular, we define
\[
B(p_1,p_2) = \left\{ \begin{array}{ccc} & \ (-p_2,p_1^3)  & \ \ p_1 \geq 0 \\ & (-p_2,0)  & \ \ p_1 < 0 \end{array} \right.
\]
The fibration is not skew, but it corresponds to a contact structure, tight by Theorem \ref{thm:main2}.
\end{example}

It is shown in \cite{Harrison} that a skew fibration of $\R^3$ corresponds via (inverse) central projection to a great circle fibration if and only if the map $B$ is surjective, which occurs if and only if the set of directions $U$ appearing in the fibration is an open hemisphere.  We have not seen in the literature any example of a skew fibration for which $U$ is not an open hemisphere, and so we provide an example.  The idea is a simple modifcation of the Hopf fibration $B(p_1,p_2) = (-p_2,p_1)$.  In the Hopf fibration, as $|p| \to \infty$, the fiber directions limit to horizontal.  Instead, we stop the direction from increasing past some specified angle.

\begin{example}[(A smooth nondegenerate fibration which does not correspond to a great circle fibration)] 
Choose any diffeomorphism $f : (-\varepsilon,\infty) \to (-\delta,a)$, such that $f(0) = 0$, $f'(x) > 0$ and $a > 0$ is finite; e.g.\ $f(x) = \arctan x$.  Let $B : \R^2 \to \R^2$ be given by $B(0) = 0$, $B(p) = \frac{f(|p|)}{|p|} ip$.  Then $B$ is not surjective since $|B(p)| < a$ for all $p$.  We may check that $B$ is a nondegenerate fibration by checking both items of Lemma \ref{lem:noreal}.  Since $B(p)$ is orthogonal to $p$, we have $d(p) = |p|$, so the second item is trivially satisfied.  To check the first item, we let $S = S(p_1,p_2) \coloneqq |p| = \sqrt{p_1^2 + p_2^2}$ and compute for $p \neq 0$:

\[
A \coloneqq \left(\frac{\partial B}{\partial p}\right) = \frac{1}{S^3} \left( \begin{array}{cc} p_1p_2(f(S) - Sf'(S)) & -p_2^2Sf'(S) - p_1^2f(S) \\ p_1^2Sf'(S) + p_2^2f(S) & p_1p_2(Sf'(S) - f(S)) \end{array} \right).
\] 
The eigenvalues of $A$ are the roots of the characteristic polynomial $\lambda^2 +$ tr$(A)\lambda + \det(A) = 0$.  The trace is zero, and we compute
\[
\det(A) = \frac{f(S)f'(S)}{S} > 0,
\]
so there are no real eigenvalues for $p \neq 0$.  It remains to check nondegeneracy at $p = 0$.  We claim that
\begin{align}
\label{eqn:deriv}
\frac{\partial B}{\partial p}(0) = \left( \begin{array}{cc} 0 & -f'(0) \\ f'(0) & 0 \end{array} \right)
\end{align}
and check by the definition of derivative:
\[
\lim_{|h| \to 0} \frac{B(h) 
- \left(\begin{array} {cc} 0 & -f'(0) \\ f'(0) & 0 \end{array} \right)  \left(\begin{array}{c} h_1 \\ h_2 \end{array} \right)}{|h|}
= \lim_{|h| \to 0} \left(\frac{f(|h|)}{|h|} - f'(0)\right)\frac{ih}{|h|}.
\]
The limit of the left factor is zero by definition of $f'(0)$, and each component of the vector on the right is bounded above by $1$, so the limit of the product exists and is $0$.  Hence the derivative is as claimed in (\ref{eqn:deriv}), and this has no real eigenvalues.  Thus the fibration is nondegenerate.
\end{example}

\bibliographystyle{plain}
\bibliography{bib}{}

\end{document}